

\documentclass[]{article}
\usepackage{amsthm}
\usepackage{amsfonts}
\usepackage{amsmath,url}

\newtheorem{theorem}{Theorem}[section]
\newtheorem{corollary}[theorem]{Corollary}

\newtheorem{example}[theorem]{Example}

\title{Heun functions and combinatorial identities}
\author{Adina  B\u{a}rar\thanks{Technical University of Cluj-Napoca, Department of Mathematics, Memorandumului Street 28, 400114, Cluj-Napoca, Romania}, Gabriela Raluca Mocanu\thanks{Romanian Academy, Cluj-Napoca Branch, Astronomical Institute, Cire\c{s}ilor Street 19, 400487, Cluj-Napoca, Romania,
gabriela.mocanu$@$academia-cj.ro}, Ioan Ra\c{s}a\thanks{Technical University of Cluj-Napoca, Department of Mathematics \newline, Memorandumului Street 28, 400114, Cluj-Napoca, Romania}}

\date{}
\begin{document}

\maketitle

Subject Class: 33E30, 94A17, 05A19.
\newline
Keywords: Heun functions, entropies, combinatorial identities.
\newline \newline
\textbf{Abstract}
\newline
We give closed forms for several families of Heun functions related to classical entropies. By comparing two expressions of the same Heun function, we get several combinatorial identities generalizing some classical ones.

\vspace{1.5cc}
\begin{center}
{\bf 1. INTRODUCTION}
\end{center}
Consider the general Heun equation (see, e.g., \cite{12}, \cite{4}, \cite{2} and the references therein)
\begin{equation}
u''(x)+\left ( \frac{\gamma}{x}+\frac{\delta}{x-1}+\frac{\epsilon}{x-a} \right )u'(x)+\frac{\alpha \beta x-q}{x(x-1)(x-a)}u(x)=0,\label{eq:1.1}
\end{equation}
where $a\notin \{0,1\}$, $\gamma \notin \{ 0,-1,-2,\dots\}$ and $\alpha+\beta+1=\gamma +\delta+\epsilon$. Its solution $u(x)$ normalized by the condition $u(0)=1$ is called the \emph{(local) Heun function} and is denoted by $Hl(a,q;\alpha,\beta;\gamma, \delta;x)$.

The confluent Heun equation is
\begin{equation}
u''(x)+\left ( 4p+\frac{\gamma}{x}+\frac{\delta}{x-1} \right ) u'(x)+\frac{4p\alpha x-\sigma}{x(x-1)}u(x)=0, \label{eq:1.2}
\end{equation}
where $p\neq 0$. The solution $u(x)$ normalized by $u(0)=1$ is called the \emph{confluent Heun function} and is denoted by $HC(p,\gamma,\delta,\alpha,\sigma;x)$.

It was proved in \cite{5} that
\begin{equation}
Hl \left(\frac{1}{2},-n;-2n,1;1,1;x \right) = \sum _{k=0}^n \left ( {n\choose k} x^k (1-x)^{n-k} \right )^2,\label{eq:1.3}
\end{equation}
\begin{equation}
Hl \left(\frac{1}{2},n;2n,1;1,1;-x \right) = \sum _{k=0}^\infty \left ( {n+k-1\choose k} x^k (1+x)^{-n-k} \right )^2,\label{eq:1.4}
\end{equation}\
\begin{equation}
HC \left(n,1,0,\frac{1}{2},2n;x \right) = \sum _{k=0}^\infty \left ( e^{-nx} \frac{(nx)^k}{k!} \right )^2.\label{eq:1.5}
\end{equation}

More general results, providing closed forms of the functions\\ $Hl \left(\frac{1}{2},-2n\theta;-2n,2\theta;\gamma,\gamma;x \right)$ and $Hl \left(\frac{1}{2},2n\theta;2n,2\theta;\gamma,\gamma;x \right)$, and explicit expressions for some confluent Heun functions can be found in \cite{6}.

In this paper we give closed forms for several families of Heun functions and confluent Heun functions, extending \eqref{eq:1.3}, \eqref{eq:1.4} and \eqref{eq:1.5}. Basic tools will be the results of \cite{3} and \cite{10} concerning the derivatives of Heun functions, respectively confluent Heun functions; see also \cite{1} and \cite{6}.

By comparing two expressions of the same Heun function, we get several combinatorial identities; very particular forms of them can be traced in the classical book \cite{9}.

Let us mention that the families of (confluent) Heun functions mentioned above are naturally related to some classical entropies: see \cite{1}, \cite{5}, \cite{6}, \cite{7}, \cite{8}.

Throughout the paper we shall use the notation
\begin{equation*}
(x)_0:=1, \quad (x)_k:=x(x+1)\dots (x+k-1), \quad k\geq 1,
\end{equation*}
\begin{equation}
a_{nj}:=4^{-n} {2j \choose j} {2n-2j \choose n-j}, \label{eq:1.6}
\end{equation}
\begin{equation}
r_{nj}:= {n \choose j}^{-1}a_{nj}. \label{eq:1.7}
\end{equation}
\vspace{2cc}

\vspace{1.5cc}
\begin{center}
{\bf 2. HEUN FUNCTIONS}
\end{center}
\subsection*{Let $\alpha \beta \neq 0$.}

As a consequence of the results of \cite{3} we have (see \cite[Prop. 1]{6} and \cite[(14)]{6}):
\begin{eqnarray}
Hl \left ( \frac{1}{2}, \frac{1}{2}(\alpha +2)(\beta +2);\alpha +2, \beta +2; \gamma +1 , \gamma +1; x \right ) = \label{eq:2.1} \\
\frac{\gamma}{\alpha \beta} (1-2x)^{-1}\frac{d}{dx}Hl \left ( \frac{1}{2}, \frac{1}{2}\alpha \beta; \alpha , \beta ; \gamma , \gamma ; x \right ). \nonumber
\end{eqnarray}

From \cite[(6)]{6}, \cite[(22)]{6} and \eqref{eq:1.6} we obtain
\begin{equation}
Hl\left ( \frac{1}{2}, -n; -2n,1 ; 1,1 ; x \right ) = \sum _{j=0}^n a_{nj} (1-2x)^{2j}.\label{eq:2.2}
\end{equation}

\begin{theorem}\label{thm:2.1}
Let $0\leq m\leq n$. Then
\begin{eqnarray}
&&Hl \left ( \frac{1}{2}, (2m+1)(m-n);2(m-n),2m+1; m +1 , m +1; x \right ) \label{eq:2.3} \\
&=&4^m {n \choose m}^{-1} {2m \choose m}^{-1} \sum _{j=0}^{n-m} {m+j \choose m}a_{n,m+j}(1-2x)^{2j} \nonumber\\
&=& \sum _{j=0}^{n-m} 4^j {n-m \choose j} \frac{(m+1/2)_j}{(m+1)_j} (x^2-x)^j.\nonumber
\end{eqnarray}
\end{theorem}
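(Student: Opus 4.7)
The strategy is to obtain the first closed form by iterating the derivative relation \eqref{eq:2.1} exactly $m$ times starting from the base case \eqref{eq:2.2}, and then to pass to the third expression by a hypergeometric reduction (with the binomial expansion route giving the combinatorial identity promised in the introduction). Writing $(\alpha_i,\beta_i,\gamma_i):=(2(i-n),2i+1,i+1)$, one checks that \eqref{eq:2.1} increments $(\alpha_i,\beta_i,\gamma_i)\mapsto(\alpha_{i+1},\beta_{i+1},\gamma_{i+1})$ correctly and that $\tfrac12\alpha_i\beta_i=(2i+1)(i-n)$, so starting from $i=0$ (the case given by \eqref{eq:2.2}) we reach the parameters of the stated Heun function after exactly $m$ iterations.

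Denote $T:=(1-2x)^{-1}\tfrac{d}{dx}$. A direct computation gives $T(1-2x)^{2j}=-4j\,(1-2x)^{2j-2}$, whence $T^{m}(1-2x)^{2j}=(-4)^{m}\,j(j-1)\cdots(j-m+1)\,(1-2x)^{2(j-m)}$ for $j\geq m$ and zero otherwise. Combining $T^{m}$ with the accumulated scalar factor $\prod_{i=0}^{m-1}\gamma_i/(\alpha_i\beta_i)$ and using $\prod_{i=1}^{m}(2i-1)=(2m)!/(2^{m}m!)$ together with $\prod_{i=1}^{m}(i-1-n)=(-1)^{m}n!/(n-m)!$, the $(-1)^{m}$ factors cancel and the factorials collapse; after the relabeling $j=m+k$, the coefficient of $(1-2x)^{2k}$ becomes exactly $4^{m}\binom{n}{m}^{-1}\binom{2m}{m}^{-1}\binom{m+k}{m}a_{n,m+k}$, which is the first equality.

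For the second equality I would expand $(1-2x)^{2k}=(1+4(x^{2}-x))^{k}=\sum_{j=0}^{k}\binom{k}{j}4^{j}(x^{2}-x)^{j}$ and swap sums; the identification of the resulting inner sum over $k$ with $\binom{n-m}{j}(m+\tfrac12)_{j}/(m+1)_{j}$ is the combinatorial content of the theorem. Alternatively, our parameters satisfy $q=\alpha\beta/2$, so the substitution $y=4x(1-x)$ (noting $(1-2x)^{2}=1-y$) reduces \eqref{eq:1.1} to the Gauss hypergeometric equation $y(1-y)f''+[(m+1)-(2m-n+\tfrac32)y]f'-(m-n)(m+\tfrac12)f=0$, whose unique solution with $f(0)=1$ is ${}_{2}F_{1}(m-n,m+\tfrac12;m+1;y)$; expanding this terminating series in $y=-4(x^{2}-x)$ reproduces the third expression directly and bypasses the combinatorial identity.

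\textbf{Main obstacle.} The most delicate step is the sign and factorial bookkeeping in the iteration, showing that all the $(-1)^{m}$ contributions cancel and that the factorials reassemble into the clean form $4^{m}\binom{n}{m}^{-1}\binom{2m}{m}^{-1}$. The hypergeometric route provides an independent check of the final answer and isolates the combinatorial identity implicit in the equivalence of the two closed forms.
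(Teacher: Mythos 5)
Your proposal is correct. For the first equality you iterate \eqref{eq:2.1} $m$ times starting from \eqref{eq:2.2}; this is exactly the paper's induction on $m$, just packaged as a single closed-form product, and your bookkeeping checks out: the operator $T=(1-2x)^{-1}\tfrac{d}{dx}$ sends $(1-2x)^{2j}$ to $-4j(1-2x)^{2j-2}$, the accumulated scalar $\prod_{i=0}^{m-1}\gamma_i/(\alpha_i\beta_i)$ contributes $(-1)^m$ through $\prod(i-n)$, this cancels against the $(-1)^m$ in $(-4)^m$, and the factorials reassemble into $4^m\binom{n}{m}^{-1}\binom{2m}{m}^{-1}\binom{m+k}{m}a_{n,m+k}$ exactly as you claim. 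For the equality of the first and third members the paper simply invokes Theorem 1 of \cite{6} with $\gamma=m+1$, $\theta=m+\tfrac12$ and $n$ replaced by $n-m$, whereas your second route — the quadratic substitution $y=4x(1-x)$ with $a=\tfrac12$, $q=\tfrac12\alpha\beta$, $\gamma=\delta$, reducing \eqref{eq:1.1} to the hypergeometric equation for ${}_2F_1(m-n,m+\tfrac12;m+1;y)$ — is a genuinely different and self-contained argument; it reproves the cited result from scratch and makes the mechanism (a classical Heun-to-hypergeometric reduction in the sense of \cite{2}) transparent. One caution: your first route to the third expression, expanding $(1-2x)^{2k}=(1+4(x^2-x))^k$ and swapping sums, is not by itself a proof — the inner-sum identification you would need is precisely identity \eqref{eq:2.5} of Corollary \ref{cor:2.2}, which the paper \emph{derives from} the theorem, so that route is circular unless the identity is established independently. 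Since you flag this and supply the hypergeometric reduction as the actual argument, the proof stands; just make the ${}_2F_1$ route the official one and relegate the binomial expansion to its proper role of generating Corollary \ref{cor:2.2}.
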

\begin{proof}
We shall prove the first equality by induction with respect to $m$.

For $m=0$, it follows from \eqref{eq:2.2}. Suppose that it is valid for a certain $m<n$. Then \eqref{eq:2.1} implies
\begin{equation*}
Hl \left ( \frac{1}{2},\frac{1}{2} (2m+3)(m+1-n);2(m+1-n),2m+3;m+2,m+2;x\right )
\end{equation*}
\begin{equation*}
=\frac{(m+1)(1-2x)^{-1}}{2(m-n)(2m+1)}\frac{d}{dx}Hl \left ( \frac{1}{2},(2m+1)(m-n);2(m-n),2m+1;m+1,m+1;x\right )
\end{equation*}
\begin{equation*}
=\frac{(m+1)(1-2x)^{-1}}{2(m-n)(2m+1)}4^m {n \choose m}^{-1}{2m \choose m}^{-1}\sum_{i=1}^{n-m} {m+i \choose m} a_{n,m+i} (-4i)(1-2x)^{2i-1}
\end{equation*}
\begin{equation*}
=4^{m+1} {n \choose m+1}^{-1}{2m+2 \choose m+1}^{-1}\sum_{j=0}^{n-m-1} {m+1+j \choose m+1} a_{n,m+1+j} (1-2x)^{2j},
\end{equation*}
and so the desired equality is true for $m+1$; this finishes the proof by induction.

In order to prove that the first member and the last member of~\eqref{eq:2.3} are equal, it suffices to use~\cite[Th. 1]{6} with $\gamma = m+1$, $\theta = m+\frac{1}{2}$, and $n$ replaced by $n-m$.
\end{proof}

\begin{corollary}\label{cor:2.2}
Let $0\leq i\leq n-m$, $0\leq j\leq n-m$. Then
\begin{eqnarray}
&&\sum_{j=i}^{n-m}(-1)^{j-i}{n-m \choose j} \frac{(m+1/2)_j}{(m+1)_j}{j \choose i}=\nonumber \\ &&=4^m {n \choose m}^{-1}{2m \choose m}^{-1}{m+i \choose m}a_{n,m+i},\label{eq:2.4}
\end{eqnarray}
\begin{eqnarray}
\sum_{i=j}^{n-m}{m+i \choose m} {i \choose j} a_{n,m+i} = 4^{-m} {n \choose m}{2m \choose m} \frac{(m+1/2)_j}{(m+1)_j}{n-m \choose j}.\label{eq:2.5}
\end{eqnarray}
\end{corollary}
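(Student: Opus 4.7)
The plan is to exploit the fact that Theorem~\ref{thm:2.1} provides two different polynomial expressions (in $x$) for the same Heun function, and to read off the two identities as coefficient comparisons between these representations. The bridge between the two representations is the elementary relation
\begin{equation*}
(1-2x)^2 = 1 + 4(x^2-x), \qquad \text{equivalently} \qquad 4(x^2-x) = (1-2x)^2 - 1,
\end{equation*}
which shows that both $\{(1-2x)^{2i}\}_{i\ge 0}$ and $\{(x^2-x)^j\}_{j\ge 0}$ are bases of the same filtered polynomial space, and the change of basis between them is the classical binomial transform.

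To obtain~\eqref{eq:2.4}, I would start from the second equality in~\eqref{eq:2.3}, expand
\begin{equation*}
4^j (x^2-x)^j = \bigl((1-2x)^2 - 1\bigr)^j = \sum_{i=0}^{j} \binom{j}{i}(-1)^{j-i}(1-2x)^{2i}
\end{equation*}
by the binomial theorem, and interchange the order of summation so that the polynomial becomes $\sum_{i=0}^{n-m}(1-2x)^{2i}\cdot S_i$, where $S_i$ is the inner sum on the left of~\eqref{eq:2.4}. Equating this expression with the first equality of~\eqref{eq:2.3} (the coefficient of $(1-2x)^{2i}$ there being $4^m\binom{n}{m}^{-1}\binom{2m}{m}^{-1}\binom{m+i}{m}a_{n,m+i}$) and using the linear independence of $\{(1-2x)^{2i}\}$ gives~\eqref{eq:2.4}.

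For~\eqref{eq:2.5} the strategy is symmetric: starting from the first equality in~\eqref{eq:2.3}, I would expand
\begin{equation*}
(1-2x)^{2i} = \bigl(1 + 4(x^2-x)\bigr)^i = \sum_{j=0}^{i} \binom{i}{j} 4^j (x^2-x)^j,
\end{equation*}
swap sums, and match coefficients of $(x^2-x)^j$ against the second equality in~\eqref{eq:2.3}. A small algebraic rearrangement then produces~\eqref{eq:2.5}.

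There is no real conceptual obstacle here; the main care is bookkeeping the triangular summation ranges after interchanging the order of summation, and tracking the constant factor $4^m\binom{n}{m}^{-1}\binom{2m}{m}^{-1}$ on both sides. As a sanity check, one may observe that~\eqref{eq:2.4} and~\eqref{eq:2.5} are in fact equivalent under classical binomial inversion (writing $b_j := \binom{n-m}{j}\frac{(m+1/2)_j}{(m+1)_j}$ and $c_i := 4^m\binom{n}{m}^{-1}\binom{2m}{m}^{-1}\binom{m+i}{m}a_{n,m+i}$, the two identities read $c_i = \sum_{j\ge i}(-1)^{j-i}\binom{j}{i}b_j$ and $b_j = \sum_{i\ge j}\binom{i}{j}c_i$), so proving either one of them via the above coefficient comparison suffices.
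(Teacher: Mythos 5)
Your proposal is correct and is essentially identical to the paper's own proof, which likewise equates the two polynomial expressions in \eqref{eq:2.3} via the substitutions $(x^2-x)^j = 4^{-j}\bigl((1-2x)^2-1\bigr)^j$ and $(1-2x)^{2i} = \bigl(1+4(x^2-x)\bigr)^i$ and compares coefficients. The added observation that \eqref{eq:2.4} and \eqref{eq:2.5} are related by binomial inversion is a nice sanity check but not needed.
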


\begin{proof}
It suffices to combine the last equality in~\eqref{eq:2.3} with
\begin{equation*}
(x^2-x)^j = 4^{-j}\left ( (1-2x)^2-1 \right )^j,
\end{equation*}
respectively with
\begin{equation*}
(1-2x)^{2j} = \left ( 1+4(x^2-x) \right )^j.
\end{equation*}
\end{proof}

\begin{example}
\end{example}
For $i=m=0$, \eqref{eq:2.4} reduces to
\begin{equation}
\sum _{j=0}^n \left ( -\frac{1}{4}\right)^j {n\choose j}{2j \choose j}=4^{-n}{2n \choose n},\label{eq:2.6}
\end{equation}
which is (3.85) in \cite{9}.

For $j=m=0$, \eqref{eq:2.5} becomes
\begin{equation}
\sum _{i=0}^n {2i\choose i}{2n-2i \choose n-i}=4^n,\label{eq:2.7}
\end{equation}
which is (3.90) in \cite{9}.

\subsection*{}
From \cite[(7)]{6}, \cite[(23)]{6} and \eqref{eq:1.6} we know that
\begin{equation}
Hl\left ( \frac{1}{2},n+1;2n+2,1;1,1;x \right ) =\sum_{j=0}^n a_{nj}(1-2x)^{2j-2n-1}. \label{eq:2.8}
\end{equation}

\begin{theorem}\label{thm:2.4}
For $m\geq 0$ we have
\begin{eqnarray}
&&Hl\left ( \frac{1}{2},(2m+1)(m+n+1);2(m+n+1),2m+1;m+1,m+1;x \right ) \nonumber\\
&=&{n+m\choose n}^{-1}\sum_{j=0}^n {2n+2m-2j \choose 2m}{n+m-j \choose m}^{-1}a_{nj}(1-2x)^{2j-2n-2m-1}\nonumber\\
&=&(1-2x)^{-2n-2m-1}\sum_{j=0}^n4^j{n \choose j}\frac{(1/2)_j}{(m+1)_j}(x^2-x)^j.\nonumber
\end{eqnarray}
\end{theorem}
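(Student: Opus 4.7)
The plan is to mirror the proof of Theorem~\ref{thm:2.1}. First, establish the equality between the Heun function and the middle expression by induction on $m$; then invoke a result of \cite{6} to match the closed form in $(x^2-x)$. For the base case $m=0$, the middle sum collapses (using $\binom{2n-2j}{0}=\binom{n-j}{0}=\binom{n}{n}=1$) to $\sum_{j=0}^n a_{nj}(1-2x)^{2j-2n-1}$, which coincides with the Heun function by~\eqref{eq:2.8}.

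For the inductive step, assume the equality at level $m$ and apply~\eqref{eq:2.1} with $\alpha = 2(m+n+1)$, $\beta = 2m+1$, $\gamma = m+1$. With this choice $\alpha+2$, $\beta+2$, $\gamma+1$, and $\tfrac12(\alpha+2)(\beta+2)$ are exactly the level-$(m+1)$ parameters, and the prefactor on the right-hand side of~\eqref{eq:2.1} is $\tfrac{m+1}{2(m+n+1)(2m+1)}(1-2x)^{-1}$. Substituting the inductive hypothesis and differentiating each term via $\tfrac{d}{dx}(1-2x)^{2j-2n-2m-1}= 2(2n+2m+1-2j)(1-2x)^{2j-2n-2m-2}$ produces the correct power $(1-2x)^{2j-2n-2m-3}$ and an explicit coefficient for $a_{nj}$. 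Verifying that this coefficient equals $\binom{n+m+1}{n}^{-1}\binom{2n+2m+2-2j}{2m+2}\binom{n+m+1-j}{m+1}^{-1}$ reduces to evaluating the ratio of the two binomial triples at levels $m$ and $m+1$; using $2n+2m+2-2j = 2(n+m+1-j)$ together with $2m+2 = 2(m+1)$, this ratio collapses exactly to the factor $\tfrac{(m+1)(2n+2m+1-2j)}{(m+n+1)(2m+1)}$ produced by the derivative and the prefactor, which closes the induction.

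For the equality between the first and the third members, it suffices to invoke the ``plus'' counterpart of \cite[Th.~1]{6} for the family $Hl(\tfrac12,2N\theta;2N,2\theta;\gamma,\gamma;x)$, with $N = m+n+1$, $\theta = m+\tfrac12$, and $\gamma = m+1$; since $\gamma-\theta = \tfrac12$ and $N-\gamma = n$, the resulting closed form yields precisely $(1-2x)^{-2n-2m-1}\sum_{j=0}^n 4^j\binom{n}{j}\tfrac{(1/2)_j}{(m+1)_j}(x^2-x)^j$. The main hurdle is the bookkeeping in the inductive step, where four binomial coefficients must align; the two elementary factorizations above make the ratio collapse cleanly, so no deep identity is required.
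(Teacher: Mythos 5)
Your proposal is correct and follows essentially the same route as the paper: induction on $m$ with base case~\eqref{eq:2.8} and inductive step via~\eqref{eq:2.1}, followed by an appeal to the closed form from \cite{6} (the ``plus'' family result, i.e.\ Cor.~2 there) with $\gamma=m+1$, $\theta=m+\tfrac12$ and $n$ replaced by $n+m+1$. Your explicit verification of the coefficient ratio in the inductive step is a correct filling-in of the detail the paper leaves implicit.
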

\begin{proof}
As in the proof of Theorem~\ref{thm:2.1}, the first equality can be proved by induction with respect to $m$, if we use \eqref{eq:2.8} and \eqref{eq:2.1}. The equality of the first member and the last member follows from \cite[Cor. 2]{6} by choosing $\gamma=m+1$, $\theta=m+1/2$, and replacing $n$ by $n+m+1$.
\end{proof}

\begin{corollary}\label{cor:2.5}
Let $0\leq i \leq n$, $0\leq j \leq n$. Then
\begin{eqnarray}
&&\sum _{j=i}^n(-1)^{j-i} {n \choose j}\frac{(1/2)_j}{(m+1)_j}{j\choose i}\nonumber \\ &=&{2n+2m-2i \choose 2m}{n+m \choose n}^{-1}{n+m-i \choose m}^{-1}a_{ni},\label{eq:2.9}
\end{eqnarray}
\begin{eqnarray}
&&\sum _{i=j}^n{2n+2m-2i \choose 2m}{n+m-i \choose m}^{-1}{i \choose j}a_{ni} \nonumber \\&=& {n+m \choose n} {n \choose j}\frac{(1/2)_j}{(m+1)_j}.\label{eq:2.10}
\end{eqnarray}
\end{corollary}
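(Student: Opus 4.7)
The plan is to mimic exactly the strategy used for Corollary~\ref{cor:2.2}, but with one preliminary normalization step to handle the negative exponents $(1-2x)^{2j-2n-2m-1}$ that appear in Theorem~\ref{thm:2.4}. Both the middle and last expressions in Theorem~\ref{thm:2.4} carry a global factor $(1-2x)^{-2n-2m-1}$, so I first multiply through by $(1-2x)^{2n+2m+1}$. This converts the identity of Heun function expansions into the polynomial identity
\begin{equation*}
{n+m\choose n}^{-1}\sum_{j=0}^{n}{2n+2m-2j\choose 2m}{n+m-j\choose m}^{-1}a_{nj}(1-2x)^{2j}=\sum_{j=0}^{n}4^{j}{n\choose j}\frac{(1/2)_{j}}{(m+1)_{j}}(x^{2}-x)^{j},
\end{equation*}
which is a polynomial identity in $x$ that must hold coefficientwise in either of the two natural bases $\{(1-2x)^{2i}\}$ or $\{(x^{2}-x)^{i}\}$.

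To obtain \eqref{eq:2.9}, I would expand the right-hand side using $(x^{2}-x)^{j}=4^{-j}\bigl((1-2x)^{2}-1\bigr)^{j}$ by the binomial theorem, interchange the order of summation, and read off the coefficient of $(1-2x)^{2i}$. The factor $4^{j}\cdot 4^{-j}=1$ cancels, leaving precisely the alternating sum on the left-hand side of \eqref{eq:2.9}, while the coefficient from the left-hand side of the polynomial identity supplies the right-hand side of \eqref{eq:2.9}.

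To obtain \eqref{eq:2.10}, I would go the other direction: expand $(1-2x)^{2j}=\bigl(1+4(x^{2}-x)\bigr)^{j}$ on the left-hand side, swap summations, and match the coefficient of $4^{i}(x^{2}-x)^{i}$ on both sides. The inverted binomial coefficient ${n+m\choose n}^{-1}$ on the left is absorbed onto the right-hand side, producing exactly \eqref{eq:2.10}.

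The only real obstacle is purely bookkeeping: keeping track of which summation index is being renamed as we interchange sums, and making sure the cancellations of the $4^{j}$ and ${n+m\choose n}^{-1}$ factors are done correctly. There is no genuine analytic content beyond Theorem~\ref{thm:2.4}; the corollary is a formal consequence of comparing coefficients in two bases of the polynomial ring, exactly parallel to the derivation of Corollary~\ref{cor:2.2} from Theorem~\ref{thm:2.1}.
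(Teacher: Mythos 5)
Your proposal is correct and matches the paper's own argument: the paper proves Corollary~\ref{cor:2.5} by declaring it "similar to the proof of Corollary~\ref{cor:2.2}," i.e.\ by combining the two expansions in Theorem~\ref{thm:2.4} with $(x^2-x)^j=4^{-j}\bigl((1-2x)^2-1\bigr)^j$ and $(1-2x)^{2j}=\bigl(1+4(x^2-x)\bigr)^j$ and comparing coefficients. Your explicit preliminary step of clearing the common factor $(1-2x)^{-2n-2m-1}$ is exactly the small adjustment the paper leaves implicit.
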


The proof is similar to the proof of Corollary~\ref{cor:2.2}.

For $i=m=0$, \eqref{eq:2.9} reduces to \eqref{eq:2.6}, i.e., (3.85) in \cite{9}.

For $j=m=0$, \eqref{eq:2.10} reduces to \eqref{eq:2.7}, i.e., (3.90) in \cite{9}.

\subsection*{}
Let again $\alpha \beta \neq 0$. According to the results of \cite{3} (see \cite[Prop. 1]{6} and \cite[(15)]{6}), we have
\begin{eqnarray}
Hl\left ( \frac{1}{2}, \frac{1}{2} (2\gamma-\alpha)(2\gamma-\beta);2\gamma-\alpha,2\gamma-\beta;\gamma+1,\gamma+1;x \right )\nonumber\\
=\frac{\gamma}{\alpha \beta}(1-2x)^{\alpha+\beta+1-2\gamma}\frac{d}{dx}Hl\left ( \frac{1}{2},\frac{1}{2}\alpha\beta;\alpha,\beta;\gamma,\gamma;x \right ).\label{eq:2.11}
\end{eqnarray}

Using \eqref{eq:2.2}, \eqref{eq:2.11} and the above methods of proof, we obtain the following identities:
\begin{eqnarray}
&&Hl\left ( \frac{1}{2}, (2k+1)(k-n);2(k-n),2k+1;2k+1,2k+1;x \right )\nonumber\\
&=&4^k {n+k \choose n}^{-1}{n \choose k}^{-1}\sum _{i=0}^{n-k}{2n-2i \choose 2k}{n \choose i}r_{n,k+i}(1-2x)^{2i}\nonumber \\
&=&\sum _{j=0}^{n-k}4^j {n-k \choose j}\frac{(k+1/2)_j}{(2k+1)_j}(x^2-x)^j, \quad 0\leq k \leq n.\label{eq:2.12}
\end{eqnarray}

As a consequence of \eqref{eq:2.12} one gets
\begin{eqnarray}
&&\sum_{j=i}^{n-k}(-1)^{j-i} {n-k \choose j} \frac{(k+1/2)_j}{(2k+1)_j}{j \choose i}\label{eq:2.13} \\
&=&4^k {n+k \choose n}^{-1}{n \choose k}^{-1}{2n-2i \choose 2k}{n \choose i}r_{n,k+i}, \quad 0\leq i \leq n-k,\nonumber
\end{eqnarray}
\begin{eqnarray}
&&\sum_{i=j}^{n-k}{2n-2i \choose 2k}{n \choose i}{i\choose j}r_{n,k+i}\label{eq:2.14} \\
&=&4^{-k} {n+k \choose n}{n \choose k}{n-k \choose j} \frac{(k+1/2)_j}{(2k+1)_j}, \quad 0\leq j \leq n-k.\nonumber
\end{eqnarray}

For $i=k=0$, \eqref{eq:2.13} reduces to \eqref{eq:2.6}; for $j=k=0$, \eqref{eq:2.14} becomes \eqref{eq:2.7}.

\begin{eqnarray}
&&Hl\left ( \frac{1}{2}, (2k-1)(k+n);2(k+n),2k-1;2k,2k;x \right )\nonumber\\
&=&2^{2k-1}{n+k-1 \choose k-1}^{-1}{n-1 \choose k-1}^{-1}\sum _{i=0}^{n-k}{2n-2i-2 \choose 2k-2}{n-1 \choose i}r_{n,k+i}(1-2x)^{1-2n+2i}\nonumber \\
&=&(1-2x)^{1-2n}\sum _{j=0}^{n-k}4^j {n-k \choose j}\frac{(k+1/2)_j}{(2k)_j}(x^2-x)^j, \quad 1\leq k \leq n.\label{eq:2.15}
\end{eqnarray}

From \eqref{eq:2.15} we derive
\begin{eqnarray}
&&\sum _{j=i}^{n-k}(-1)^{j-i}{n-k \choose j}\frac{(k+1/2)_j}{(2k)_j}{j\choose i}\label{eq:2.16}\\
&=&2^{2k-1}{n+k-1 \choose k-1}^{-1}{n-1 \choose k-1}^{-1}{2n-2i-2 \choose 2k-2}{n-1\choose i}r_{n,k+i},\quad 0\leq i \leq n-k,\nonumber
\end{eqnarray}
\begin{eqnarray}
&&\sum _{i=j}^{n-k}{2n-2i-2 \choose 2k-2} {n-1 \choose i}{i \choose j}r_{n,k+i}\label{eq:2.17}\\
&=&2^{1-2k}{n+k-1 \choose k-1}{n-1 \choose k-1}{n-k \choose j}\frac{(k+1/2)_j}{(2k)_j},\quad 0\leq j \leq n-k.\nonumber
\end{eqnarray}

For $i=0$, $k=1$ and $n$ replaced by $n+1$, from \eqref{eq:2.16} we obtain
\begin{equation}
\sum _{j=0}^n \left (-\frac{1}{4}\right )^j{n \choose j}{2j+1 \choose j} = \frac{1}{(n+1)4^n}{2n \choose n}. \label{eq:2.18}
\end{equation}

With $j=0$, $k=1$, and $n$ replaced by $n+1$, \eqref{eq:2.17} yields
\begin{equation}
\sum _{i=0}^n (i+1){2i+2 \choose i+1}{2n-2i \choose n-i} = \frac{n+1}{2}4^{n+1}. \label{eq:2.19}
\end{equation}

It is a pleasant calculation to prove \eqref{eq:2.18} and \eqref{eq:2.19} directly.

\subsection*{}
Using \eqref{eq:2.8} and \eqref{eq:2.11} we get
\begin{eqnarray}
&&Hl\left ( \frac{1}{2}, (2k+1)(k+n+1);2(k+n+1),2k+1;2k+1,2k+1;x \right )\nonumber\\
&=&4^{k} {n+k \choose n}^{-1}{n \choose k}^{-1}\sum _{j=0}^{n-k}{2k+2j \choose 2j}{n \choose k+j}r_{nj}(1-2x)^{-2k-1-2j}\nonumber\\
&=&(1-2x)^{-2n-1}\sum_{j=0}^{n-k} 4^j {n-k \choose j}\frac{(k+1/2)_j}{(2k+1)_j}(x^2-x)^j, \quad 0\leq k \leq n.\label{eq:2.20}
\end{eqnarray}

Taking into account that $r_{n,n-j}=r_{nj}$, from \eqref{eq:2.20} we derive \eqref{eq:2.13} and \eqref{eq:2.14}.

Moreover,
\begin{eqnarray}
Hl\left ( \frac{1}{2}, (2k+1)(k-n);2(k-n),2k+1;2k+2,2k+2;x \right )\nonumber\\
=4^{k}\frac{2k+1}{n+1} {n+k+1 \choose n}^{-1}{n \choose k}^{-1}\sum _{j=0}^{n-k}{2k+2j+2 \choose 2j}{n+1 \choose k+j+1}r_{nj}(1-2x)^{2n-2k-2j}\nonumber\\
=\sum_{j=0}^{n-k}4^j {n-k \choose j}\frac{(k+1/2)_j}{(2k+2)_j}(x^2-x)^j, \quad 0\leq k \leq n.\label{eq:2.21}
\end{eqnarray}

From \eqref{eq:2.21} we derive
\begin{eqnarray}
&&\sum_{j=i}^{n-k}(-1)^{j-i} {n-k \choose j}\frac{(k+1/2)_j}{(2k+2)_j}{j \choose i}, \quad 0\leq i \leq n-k\label{eq:2.22}\\
&=&4^{k}\frac{2k+1}{n+1} {n+k+1 \choose n}^{-1}{n \choose k}^{-1}{2n-2i+2 \choose 2k+2}{n+1 \choose i}r_{n,k+i},\nonumber
\end{eqnarray}
\begin{eqnarray}
&&\sum_{i=j}^{n-k}{2n-2i+2 \choose 2k+2}{n+1 \choose i}{i \choose j}r_{n,k+i}, \quad 0\leq j \leq n-k\label{eq:2.23}\\
&=&4^{-k}\frac{n+1}{2k+1} {n+k+1 \choose n}{n \choose k}{n-k \choose j}\frac{(k+1/2)_j}{(2k+2)_j}.\nonumber
\end{eqnarray}

For $i=k=0$, \eqref{eq:2.22} becomes
\begin{equation}
\sum _{j=0}^n \left (-\frac{1}{4}\right )^j{n+1 \choose j+1}{2j \choose j} = \frac{2n+1}{4^n}{2n \choose n}. \label{eq:2.24}
\end{equation}

Let us recall the formula (7.6) in \cite{9}:
\begin{equation}
\sum _{j=0}^n \left (-\frac{1}{4}\right )^j{n \choose j}{2j \choose j}{j+h \choose h}^{-1} = \frac{1}{4^n}{2n+2h \choose n+h}{2h \choose h}^{-1}. \label{eq:2.25}
\end{equation}

For $h=1$, \eqref{eq:2.25} reduces to \eqref{eq:2.24}.

For $j=k=0$, \eqref{eq:2.23} becomes
\begin{equation}
\sum _{i=0}^n (2n-2i+1){2i \choose i}{2n-2i \choose n-i}=(n+1)4^n, \label{eq:2.26}
\end{equation}
which can be proved also directly.
\vspace{2cc}

\vspace{1.5cc}
\begin{center}
{\bf 3. CONFLUENT HEUN FUNCTIONS}
\end{center}
The hypergeometric function $v(t) = {_1F_1}(\alpha;\gamma;t)$ satisfies (see~\cite[p. 336]{11}, \cite[13.2.1]{13}) $v(0)=1$ and
\begin{equation}
tv''(t)+(\gamma -t)v'(t)-\alpha v(t)=0.\label{eq:3.1}
\end{equation}

Moreover (see~\cite[p. 338, 5.6]{11}, \cite[13.3.15]{13}),
\begin{equation}
{_1F_1}(\alpha +1;\gamma +1;t) = \frac{\gamma}{\alpha}\frac{d}{dt}{_1F_1}(\alpha;\gamma;t).
\label{eq:3.2}
\end{equation}

With the above notation we have:
\begin{theorem}
For $\alpha p\neq 0$, the confluent Heun function $HC(p,\gamma,0,\alpha,4p\alpha;x)$ satisfies
\begin{equation}
HC(p,\gamma,0,\alpha,4p\alpha;x)={_1F_1}(\alpha;\gamma;-4px),\label{eq:3.3}
\end{equation}
\begin{equation}
HC(p,\gamma +1,0,\alpha +1,4p(\alpha +1);x)=-\frac{\gamma}{4p\alpha}\frac{d}{dx}HC(p,\gamma,0,\alpha,4p\alpha;x),\label{eq:3.4}
\end{equation}
\begin{equation}
HC(p,\gamma +j,0,\alpha +j,4p(\alpha +j);x)=\frac{(-1)^j(\gamma)_j}{(4p)^j(\alpha)_j}\frac{d^j}{dx^j}HC(p,\gamma,0,\alpha,4p\alpha;x),\label{eq:3.5}
\end{equation}
for all integers $j\geq 0$ with $(\alpha)_j\neq 0$.
\end{theorem}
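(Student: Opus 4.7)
The plan is to derive (3.3) by verifying that the right side satisfies the confluent Heun equation with the prescribed parameters, then obtain (3.4) by differentiating (3.3) and invoking (3.2), and finally obtain (3.5) by induction on $j$.

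First I would substitute $\delta=0$ and $\sigma=4p\alpha$ into the confluent Heun equation \eqref{eq:1.2}. A small miracle occurs: the coefficient of $u(x)$ becomes $\frac{4p\alpha x-4p\alpha}{x(x-1)}=\frac{4p\alpha}{x}$, so the singularity at $x=1$ disappears and the equation simplifies to
\begin{equation*}
u''(x)+\left(4p+\frac{\gamma}{x}\right)u'(x)+\frac{4p\alpha}{x}u(x)=0.
\end{equation*}
Setting $u(x)=v(-4px)$ with $t=-4px$, computing $u'(x)=-4p\,v'(t)$, $u''(x)=16p^{2}v''(t)$, substituting, multiplying through by $x/(4p)$ and using $4px=-t$, this collapses exactly to \eqref{eq:3.1}. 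Since both $HC(p,\gamma,0,\alpha,4p\alpha;x)$ and ${_1F_1}(\alpha;\gamma;-4px)$ are normalized to the value $1$ at $x=0$, the uniqueness of the normalized local solution at the regular singular point $x=0$ (which is why $\gamma$ must not be a nonpositive integer, paralleling the hypothesis on \eqref{eq:1.1}) forces \eqref{eq:3.3}.

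For \eqref{eq:3.4}, I would simply differentiate both sides of \eqref{eq:3.3} with respect to $x$, producing
\begin{equation*}
\frac{d}{dx}HC(p,\gamma,0,\alpha,4p\alpha;x)=-4p\,\frac{d}{dt}{_1F_1}(\alpha;\gamma;t)\Big|_{t=-4px},
\end{equation*}
and then apply \eqref{eq:3.2} to rewrite the derivative of the hypergeometric function as $\frac{\alpha}{\gamma}{_1F_1}(\alpha+1;\gamma+1;-4px)$. Invoking \eqref{eq:3.3} again with $(\alpha,\gamma)$ shifted to $(\alpha+1,\gamma+1)$ identifies this with $HC(p,\gamma+1,0,\alpha+1,4p(\alpha+1);x)$; solving for the latter yields \eqref{eq:3.4}.

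Finally, \eqref{eq:3.5} follows by induction on $j$. The case $j=0$ is trivial. Assuming the formula for $j$, I differentiate once more and apply \eqref{eq:3.4} with $\alpha$ and $\gamma$ replaced by $\alpha+j$ and $\gamma+j$, which telescopes the Pochhammer symbols into $(\gamma)_{j+1}$ and $(\alpha)_{j+1}$ and multiplies the prefactor by $-1/(4p)$, giving the formula for $j+1$; the hypothesis $(\alpha)_{j}\neq 0$ ensures all the intermediate steps are legitimate. No step is really an obstacle here; the only point requiring care is the simplification of the confluent Heun coefficient that removes the $x=1$ singularity, which is what makes the reduction to a confluent hypergeometric equation possible in the first place.
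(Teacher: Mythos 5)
Your proposal is correct and follows essentially the same route as the paper: reduce the confluent Heun equation with $\delta=0$, $\sigma=4p\alpha$ to $xu''+(4px+\gamma)u'+4p\alpha u=0$, identify $u(x)=v(-4px)$ with the solution of \eqref{eq:3.1} via normalization at $x=0$, then obtain \eqref{eq:3.4} from \eqref{eq:3.2} and \eqref{eq:3.5} by induction. You merely spell out the substitution details that the paper leaves as "easy to deduce."
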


\begin{proof}
According to~\eqref{eq:1.2}, the function $u(x)=HC(p,\gamma,0,\alpha,4p\alpha;x)$ satisfies $u(0)=1$ and
\begin{equation}
xu''(x)+(4px+\gamma)u'(x)+4p\alpha u(x)=0.\label{eq:3.6}
\end{equation}

From~\eqref{eq:3.1} and \eqref{eq:3.6} it is easy to deduce that $u(x)=v(-4px)$, and this entails \eqref{eq:3.3}.

Now \eqref{eq:3.4} is a consequence of \eqref{eq:3.3} and \eqref{eq:3.2}; \eqref{eq:3.5} can be proved by induction with respect to $j$. Let us remark that \eqref{eq:3.4} coincides with (30) in \cite{6}.
\end{proof}

\begin{corollary}
Let $K_n(x) := HC \left ( n,1,0,\frac{1}{2},2n;x \right )$ be the function given by \eqref{eq:1.5}. Then
\begin{equation}
K_n(x)={_1F_1}\left (\frac{1}{2};1;-4nx \right ),\label{eq:3.7}
\end{equation}
\begin{equation}
K_n(x)=\frac{1}{\pi}\int _{-1}^1 e^{-2nx(1+t)}\frac{dt}{\sqrt{1-t^2}}.\label{eq:3.8}
\end{equation}
\end{corollary}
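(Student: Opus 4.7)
The plan is to obtain \eqref{eq:3.7} as an immediate specialization of \eqref{eq:3.3}, and then to derive \eqref{eq:3.8} from \eqref{eq:3.7} by inserting Euler's integral representation of ${}_1F_1$ and performing a linear change of variable.

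For \eqref{eq:3.7}, I would simply apply \eqref{eq:3.3} with $p=n$, $\gamma=1$, $\alpha=1/2$; the consistency condition $4p\alpha=2n$ matches the last argument of $HC$ in the definition of $K_n$, so the identity reduces to a direct substitution of parameters.

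For \eqref{eq:3.8}, the idea is to start from \eqref{eq:3.7} and use the classical Euler-type integral representation
\begin{equation*}
{}_1F_1(a;c;z)=\frac{\Gamma(c)}{\Gamma(a)\Gamma(c-a)}\int_0^1 e^{zt}\,t^{a-1}(1-t)^{c-a-1}\,dt,
\end{equation*}
specialized at $a=1/2$, $c=1$, $z=-4nx$. This gives
\begin{equation*}
K_n(x)=\frac{1}{\pi}\int_0^1 \frac{e^{-4nxt}}{\sqrt{t(1-t)}}\,dt,
\end{equation*}
after using $\Gamma(1/2)^2=\pi$. Then the substitution $t=(1+s)/2$ turns $\sqrt{t(1-t)}$ into $\tfrac12\sqrt{1-s^2}$ and the exponent into $-2nx(1+s)$, producing the symmetric form \eqref{eq:3.8}.

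There is essentially no obstacle, provided one is willing to invoke the Euler representation of ${}_1F_1$; the only check worth doing carefully is that the quoted representation is valid at $a=1/2$, $c=1$ (which it is, since $\mathrm{Re}\,c>\mathrm{Re}\,a>0$), and that the change of variable correctly maps $[0,1]$ onto $[-1,1]$ with the proper Jacobian. If one preferred to avoid invoking the integral representation of ${}_1F_1$ directly, an equivalent route would be to use the series \eqref{eq:1.5}, recognize $K_n(x)=e^{-2nx}\sum_{k\ge0}(nx)^{2k}/(k!)^2=e^{-2nx}I_0(2nx)$, and apply the standard integral representation of the modified Bessel function $I_0$; this would reach \eqref{eq:3.8} after sending $t\mapsto -t$.
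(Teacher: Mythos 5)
Your proposal is correct and follows essentially the same route as the paper: \eqref{eq:3.7} is obtained by specializing \eqref{eq:3.3} with $p=n$, $\gamma=1$, $\alpha=1/2$, and \eqref{eq:3.8} by invoking the Euler integral representation of ${}_1F_1$ (the paper cites this as \cite[p.~338, 5.9]{11} and \cite[13.4(i)]{13}) followed by the linear change of variable. You merely make explicit the substitution $t=(1+s)/2$ that the paper leaves to the reader; both the parameter check $\mathrm{Re}\,c>\mathrm{Re}\,a>0$ and the Jacobian work out as you state.
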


\begin{proof}
\eqref{eq:3.7} follows from \eqref{eq:3.3} with $\alpha = 1/2$, $\gamma=1$ and $p=n$. By using \eqref{eq:3.7} and \cite[p. 338, 5.9]{11}, \cite[13.4(i)]{13} we get \eqref{eq:3.8}. Let us remark that \eqref{eq:3.8} coincides with (69) in \cite{5}.
\end{proof}

Using \eqref{eq:3.5} with $p=n$, $\gamma =1$, $\alpha = 1/2$ we get
\begin{equation}
HC \left ( n,j+1,0,j+\frac{1}{2},2n(2j+1);x \right ) = \frac{(-1)^j}{n^j}{2j \choose j}^{-1}K_n^{(j)}(x), \quad j\geq 0.\label{eq:3.9}
\end{equation}

From \eqref{eq:3.9} and \cite[(34)]{6} we obtain
\begin{equation}
K_n^{(j)}(0) = (-n)^j {2j \choose j},\label{eq:3.10}
\end{equation}
which is (35) in \cite{6}.

On the other hand, \eqref{eq:3.8} implies (with $t=\sin \varphi$)
\begin{eqnarray}
K_n^{(j)}(0) &=&\frac{(-2n)^j}{\pi}\sum _{k=0}^j {j \choose k} \int _{-\pi/2}^{\pi/2}\sin ^k \varphi d\varphi \nonumber\\
&=&(-2n)^j\sum_{i=0}^{[j/2]}{j\choose 2i}{2i \choose i}4^{-i}.\nonumber
\end{eqnarray}

Combined with \eqref{eq:3.10}, this produces
\begin{equation*}
\sum_{i=0}^{[j/2]}{j\choose 2i}{2i \choose i}4^{-i}=2^{-j}{2j \choose j},
\end{equation*}
which is (3.99) in \cite{9}.

Finally, we give closed forms for some families of confluent Heun functions.

\begin{theorem}\label{thm:3.3}
\begin{enumerate}[(i)]
\item{}For $0\leq j \leq n$ we have
\begin{eqnarray}
&&HC\left (p,j+\frac{1}{2},0,j-n,4p(j-n);x \right )\nonumber \\ &=& \frac{(2j)!}{j!}\sum _{k=0}^{n-j}{n-j\choose k}\frac{(n-k)!}{(2n-2k)!}(16px)^{n-j-k}.\label{eq:3.11}
\end{eqnarray}

\item{}More generally, for $0\leq j\leq n$ and $\lambda >-1$,
\begin{eqnarray}
&&HC\left (p,j+1+\lambda,0,j-n,4p(j-n);x \right )\label{eq:3.12}\\ &=& \frac{(\lambda +1)_j \Gamma (\lambda +1)}{\Gamma (n+\lambda +1)}\sum _{k=0}^{n-j}(\lambda +n+1-k)_k{n-j\choose k}(4px)^{n-j-k}.\nonumber
\end{eqnarray}
\end{enumerate}
\end{theorem}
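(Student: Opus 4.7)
The plan is to derive part (ii) directly from Theorem 3.2 by recognising the confluent Heun function as a $_1F_1$, then rewriting the resulting terminating series, and finally to obtain part (i) as the specialisation $\lambda=-1/2$ of part (ii).

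First I would apply \eqref{eq:3.3} with $\alpha=j-n$ and $\gamma=j+1+\lambda$. The hypothesis $\lambda>-1$ guarantees $\gamma\notin\{0,-1,-2,\dots\}$, so \eqref{eq:3.3} yields
\[
HC\bigl(p,j+1+\lambda,0,j-n,4p(j-n);x\bigr) = {}_1F_1(j-n;\,j+1+\lambda;\,-4px).
\]
The case $j=n$ reduces to the trivial ${}_1F_1(0;\gamma;t)=1$, which matches the single-term value of the right-hand side of \eqref{eq:3.12}; for $j<n$ we have $\alpha p\ne 0$ as required.

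Next I would expand the series. Since $\alpha=j-n$ is a non-positive integer, the $_1F_1$ terminates at $k=n-j$. Using
\[
(j-n)_k = (-1)^k\,\frac{(n-j)!}{(n-j-k)!},\qquad 0\le k\le n-j,
\]
the signs cancel with $(-4px)^k$ and I get
\[
{}_1F_1(j-n;\,j+1+\lambda;\,-4px) = \sum_{k=0}^{n-j}\binom{n-j}{k}\frac{(4px)^k}{(j+1+\lambda)_k}.
\]
To cast this in the form \eqref{eq:3.12} I would reverse the summation order via $k\mapsto n-j-k$ and apply the split $(j+1+\lambda)_{n-j} = (j+1+\lambda)_{n-j-k}\,(n-k+1+\lambda)_k$. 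The common factor $1/(j+1+\lambda)_{n-j}$ pulls out of the sum and, by the Gamma representation of the Pochhammer symbol, equals $(\lambda+1)_j\,\Gamma(\lambda+1)/\Gamma(n+\lambda+1)$, which delivers \eqref{eq:3.12}.

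For part (i) I would specialise $\lambda=-1/2$ in \eqref{eq:3.12}. The outer factor becomes $(1/2)_j\,\Gamma(1/2)/\Gamma(n+1/2)$, which I would simplify using the standard identities $(1/2)_j = (2j)!/(4^j\,j!)$ and $\Gamma(n+1/2)/\Gamma(1/2) = (2n)!/(4^n\,n!)$. Inside the sum I would apply the half-integer telescoping identity $(n+\tfrac{1}{2}-k)_k = (2n)!\,(n-k)!\,/\,\bigl(4^k\,n!\,(2n-2k)!\bigr)$, and absorb the residual $4^k$ into $(4px)^{n-j-k}$ to produce $(16px)^{n-j-k}$; collecting factorials then yields \eqref{eq:3.11}. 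The main hurdle throughout is the bookkeeping in the reindexing step of part (ii): one must simultaneously reverse the sum, split the Pochhammer symbol in the denominator, and push the leftover constant through the Gamma-function identities without losing a factor. Once that manipulation is settled, the passage to part (i) is purely routine factorial arithmetic.
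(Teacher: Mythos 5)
Your proof is correct, but it takes a genuinely different route from the paper's. The paper first establishes the $j=0$ cases of \eqref{eq:3.11} and \eqref{eq:3.12} by identifying $HC(p,\tfrac12,0,-n,-4pn;x)$ and $HC(p,\lambda+1,0,-n,-4pn;x)$ with Hermite- and Laguerre-polynomial expressions for ${}_1F_1(-n;\tfrac12;\cdot)$ and ${}_1F_1(-n;\lambda+1;\cdot)$, and then climbs to general $j$ by applying the differentiation formula \eqref{eq:3.5} $j$ times. You instead apply \eqref{eq:3.3} directly at every level $j$, writing $HC(p,j+1+\lambda,0,j-n,4p(j-n);x)={}_1F_1(j-n;j+1+\lambda;-4px)$, and obtain \eqref{eq:3.12} by elementary manipulation of the terminating series (reversal $k\mapsto n-j-k$, the split $(j+1+\lambda)_{n-j}=(j+1+\lambda)_{n-j-k}(n+1+\lambda-k)_k$, and $\Gamma(j+1+\lambda)=(\lambda+1)_j\Gamma(\lambda+1)$); part (i) then falls out as the specialisation $\lambda=-\tfrac12$ via $(\tfrac12)_j=(2j)!/(4^jj!)$ and $(n+\tfrac12-k)_k=(2n)!(n-k)!/(4^kn!(2n-2k)!)$ — I have checked these computations and they are right. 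Your route dispenses entirely with \eqref{eq:3.5} and with the Hermite/Laguerre identities, unifies (i) as a corollary of (ii), and is arguably more self-contained; the paper's route, by contrast, exhibits the whole family as successive derivatives of a single classical orthogonal polynomial, which is the structural point the authors emphasise throughout. Two small points you handled correctly but should keep explicit in a final write-up: \eqref{eq:3.3} is stated only for $\alpha p\neq 0$, so the degenerate case $j=n$ (where $\alpha=j-n=0$) needs the separate one-line verification you give; and the condition $\lambda>-1$ ensures $\gamma=j+1+\lambda$ avoids the excluded nonpositive integers.
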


\begin{proof}
By using the relation between the function ${_1F_1}$ and the Hermite polynomials (see~\cite[p. 340, 5.16]{11}, \cite[p. 235, (4.51)]{11}, \cite[13.6.16]{13}) we have
\begin{equation}
{_1F_1}\left ( -n;\frac{1}{2};x\right ) = n! \sum _{k=0}^n \frac{1}{k!(2n-2k)!}(-4x)^{n-k}.\label{eq:3.13}
\end{equation}

From \eqref{eq:3.3} and \eqref{eq:3.13} it follows that
\begin{equation}
HC \left (p,\frac{1}{2},0,-n,-4pn;x \right ) = n! \sum _{k=0}^n \frac{(16px)^{n-k}}{k!(2n-2k)!}.\label{eq:3.14}
\end{equation}

Now \eqref{eq:3.11} is a consequence of \eqref{eq:3.14} and \eqref{eq:3.5}.

In order to prove \eqref{eq:3.12}, we need the relation between ${_1F_1}$ and the Laguerre polynomials (see \cite[p. 340, 5.14]{11}, \cite[13.6.19]{13}):
\begin{equation}
{_1F_1}(-n;\lambda +1;x) = \frac{n!\Gamma (\lambda +1)}{\Gamma (n+\lambda+1)}L_n^\lambda (x), \quad \lambda >-1,\label{eq:3.15}
\end{equation}
where (see \cite[p. 245, (4.61)]{11}, \cite[18.5.12]{13})
\begin{equation}
L_n^\lambda (x) = \sum _{k=0}^n (-1)^k \frac{(\lambda + k+1)_{n-k}}{k!(n-k)!}x^k.\label{eq:3.16}
\end{equation}

From \eqref{eq:3.3}, \eqref{eq:3.15} and \eqref{eq:3.16} we get
\begin{equation}
HC \left (p,\lambda+1,0,-n,-4pn;x \right ) = \frac{n!\Gamma (\lambda +1)}{\Gamma (n+\lambda+1)}L_n^\lambda (-4px).\label{eq:3.17}
\end{equation}

Combined with \eqref{eq:3.5}, \eqref{eq:3.17} produces\eqref{eq:3.12}, and this concludes the proof.
\end{proof}
\vspace{2cc}

\vspace{1.5cc}
\begin{center}
{\bf 4. OTHER COMBINATORIAL IDENTITIES}
\end{center}

Let us return to \eqref{eq:2.4}. Since
\begin{equation}
\frac{(m+1/2)_j}{(m+1)_j} = 4^{-j} {2m+2j \choose m+j}{2m \choose m}^{-1}, \label{eq:5.1}
\end{equation}
it becomes
\begin{eqnarray}
\sum_{j=i}^{n-m} (-1)^{j-i}4^{-j}{n-m \choose j}{2m+2j \choose m+j}{j \choose i} \nonumber\\
=4^{m-n}{n \choose m}^{-1} {m+i \choose m}{2m+2i \choose m+i} {2n-2m-2i \choose n-m-i}.\nonumber
\end{eqnarray}

Set $i+m=r$, $j=r-m+k$, and replace $n$ by $n+r$; we get
\begin{eqnarray}
\sum_{k=0}^{n} \left (-\frac{1}{4} \right )^k {n+r-m \choose n-k}{2r+2k \choose r+k}{r-m+k \choose k} \label{eq:5.2}\\
=4^{-n}{n+r \choose m}^{-1}{r \choose m}{2r\choose r}{2n\choose n}, \quad n\geq 0, r\geq m \geq 0.\nonumber
\end{eqnarray}

Here are some particular cases of \eqref{eq:5.2}.
\begin{equation*}
r=m=n:\quad \sum _{k=0}^n\left (-\frac{1}{4} \right )^k {n\choose k}{2n+2k \choose n+k}=4^{-n}{2n \choose n}.
\end{equation*}
\begin{equation*}
r=m:\quad \sum _{k=0}^n\left (-\frac{1}{4} \right )^k {n\choose k}{2r+2k \choose r+k}=4^{-n}{n+r\choose r}^{-1}{2r\choose r}{2n\choose n}.
\end{equation*}
\begin{equation*}
m=0:\quad \sum _{k=0}^n\left (-\frac{1}{4} \right )^k {n+r\choose n-k}{2r+2k \choose r+k}{r+k\choose k}=4^{-n}{2r\choose r}{2n\choose n}.
\end{equation*}
\begin{equation*}
r=n:\quad \sum _{k=0}^n\left (-\frac{1}{4} \right )^k {2n-m\choose n-k}{2n+2k \choose n+k}{n-m+k \choose k}=4^{-n}{2n\choose m}^{-1}{n\choose m}{2n\choose n}^2.
\end{equation*}
\begin{equation*}
m=n:\quad \sum _{k=0}^n\left (-\frac{1}{4} \right )^k {r\choose n-k}{2r+2k \choose r+k}{r-n+k\choose k}=4^{-n}{n+r\choose r}^{-1}{r\choose n}{2r\choose r}{2n\choose n}.
\end{equation*}

Now let us return to \eqref{eq:2.5}; use \eqref{eq:5.1}, set $j+m=r$, $i=r-m+k$, and replace $n$ by $n+r$. We get
\begin{eqnarray}
\sum_{k=0}^{n} {r+k\choose m}{r+k-m \choose k}{2r+2k\choose r+k}{2n-2k\choose n-k}\label{eq:5.3}\\
=4^{n}{n+r \choose m}{2r\choose r}{n+r-m\choose n}, \quad n\geq 0, r\geq m \geq 0.\nonumber
\end{eqnarray}

For $r=m=n$, \eqref{eq:5.3} reduces to
\begin{equation*}
\sum _{k=0}^n {n+k\choose n}{2n+2k \choose n+k}{2n-2k\choose n-k}=4^{n}{2n \choose n}^2.
\end{equation*}

Clearly, there are many other particular cases of \eqref{eq:5.3}.

Several other particular combinatorial identities can be obtained starting with other general formulas from the preceding sections, but we omit the details.

\vspace{2cc}
\subsection*{Acknowledgements}

GM is partially supported by a grant of the Romanian Ministry of National Education
and Scientific Research, RDI Programme for Space Technology and Advanced Research -
STAR, project number 513, 118/14.11.2016.

\vspace{2cc}

\vspace{1cc}


{\small
\noindent

}\end{document}